\documentclass[12pt]{amsart}


\usepackage{amsmath}
\usepackage{amsfonts}
\usepackage{amsthm}

\usepackage[cmtip,arrow]{xy}
\usepackage{pb-diagram, pb-xy}
\usepackage{pb-diagram, pb-xy}

\newtheorem{theorem}{Theorem}
\newtheorem{corollary}{Corollary}
\newtheorem{lemma}{Lemma}
\newtheorem{proposition}{Proposition}
\newtheorem{definition}{Definition}




\setlength{\unitlength}{1mm}
\setlength{\parskip}{2mm}
\parindent=0in


\begin{document}
\title{The class of Gorenstein injective modules is covering if and only if it is closed under direct limits}
\author{Alina Iacob}

%

\maketitle %

\begin{abstract}
We prove that the class of Gorenstein injective modules, $\mathcal{GI}$, is special precovering if and only if it is covering if and only if it is closed under direct limits. This adds to the list of examples that support Enochs' conjecture:\\ "Every covering class of modules is closed under direct limits".\\
We also give a characterization of the rings for which $\mathcal{GI}$ is covering: the class of Gorenstein injective left $R$-modules is covering if and only if $R$ is left noetherian, and such that character modules of Gorenstein injective left $R$ modules are Gorenstein flat.
\end{abstract}

\section{introduction}
Precovers and preenvelopes are fundamental concepts in relative homological
algebra and they are important in many areas of mathematics. The importance of precovers comes from the fact that their existence allows constructing resolutions with respect to a class of modules $\mathcal{C}$.
The existence of $\mathcal{C}$-covers allows constructing minimal such resolutions (which are unique up to isomorphisms).\\

We are interested here in Gorenstein injective precovers and covers. The existence of the Gorenstein injective envelopes over arbitrary rings was recently proved in \cite{saroch.stovicek}. But the question "Over which rings is the class of Gorenstein injective modules (pre)covering?" is still open. It is known that the existence of the Gorenstein injective covers implies that the ring is noetherian (\cite{christensen:11:beyond}).  We prove that the class of Gorenstein injective left $R$-modules is covering if and only if it is special precovering if and only if the ring $R$ is left noetherian and such that the character modules of left Gorenstein injective modules are Gorenstein flat right $R$-modules.

We also prove that the class of Gorenstein injective left $R$-modules is covering if and only if it is closed under direct limits. This result supports Enochs' conjecture. Enochs proved that a precovering class of modules $\mathcal{C}$ which is also closed under direct limits, is, in fact, a covering class (\cite{enochs:00:relative}, Corollary 5.2.7).
He also conjectured that "Every covering class of modules is closed under direct limits".
The conjecture has been verified for various type of classes. We now add the class of Gorenstein injective modules, $\mathcal{GI}$, to the list of classes of modules satisfying Enochs' conjecture.


We start by showing (Proposition 2) that the class of Gorenstein injective left $R$-modules, $\mathcal{GI}$, being closed under direct limits implies that it is a covering class. In \cite[Proposition 2]{iacob2023},  we proved that if $\mathcal{GI}$ is closed under direct limits then the ring $R$ is left noetherian and such that character modules of Gorenstein injectives are Gorenstein flat.
We also proved (\cite[Lemma 2]{iacob2023}) that, over such rings, $\mathcal{GI}$ is the left half of a duality pair. Therefore, (by \cite{holm:10:duality}, Theorem 3.1), $\mathcal{GI}$ being closed under direct limits implies that $\mathcal{GI}$ is a covering class. 


Then we prove (Theorem 2) that if the class of Gorenstein injective modules is special precovering, then it is closed under direct limits. In particular, this is the case when $\mathcal{GI}$ is covering.
The proof uses Proposition 3: "Let $\mathcal{W}$ be a class of left $R$-modules that is closed under direct summands, under taking cokernels of pure monomorphisms, and under
pure transfinite extensions. If $\mathcal{W}$ is closed under drect sums then $\mathcal{W}$ is closed under direct limits". Proposition 4 and Lemma 4 verify that, if $\mathcal{GI}$ is special precovering, then all these hypotheses are met.\\

We obtain a characterization of the rings over which $\mathcal{GI}$ is special precovering (Theorem 3):\\
The following statements are equivalent:\\
(1) The class of Gorenstein injective modules, $\mathcal{GI}$ is covering\\
(2) The class of Gorenstein injective modules, $\mathcal{GI}$ is special precovering.\\
(3) The class of Gorenstein injective modules is closed under direct limits.\\
(4) The ring $R$ is left noetherian and such that the character modules of Gorenstein injectives are Gorenstein flat.

Theorem 4 extends the result to complexes by showing that the class of Gorenstein injective complexes is special precovering if and only if it is covering if and only if it is closed under direct limits.



\section{preliminaries}
Throughout the paper, $R$ denotes an associative ring with unity. Unless otherwise specified, by module we mean a left $R$-module. $R-Mod$ denotes the category of left $R$-modules.

We recall the definition of Gorenstein injective modules. We will use $\mathcal{GI}$ to denote this class of modules.

\begin{definition} (\cite{enochs:95:gorenstein})
A module $M$ is called Gorenstein injective if there is an exact complex of injective modules $$\textbf{E} = \ldots \rightarrow E_1 \rightarrow E_0 \rightarrow E_{-1} \rightarrow \ldots$$ such that $M = Z_0\textbf{E}$, and such that the complex $Hom(I, \textbf{E})$ is exact for any injective module $I$.
\end{definition}

Since we use Gorenstein flat modules as well, we recall that they are the cycles of the exact complexes of flat modules that remain exact when tensored with any injective module. We use $\mathcal{GF}$ to denote this class of modules.

We also recall the definitions for Gorenstein injective precovers, covers, and special precovers. \\
\begin{definition} A homomorphism $\phi: G \rightarrow M$ is a Gorenstein injective precover of $M$ if $G$ is Gorenstein injective and if for any Gorenstein injective module $G'$ and any $\phi' \in Hom(G', M)$ there exists $u \in Hom(G', G)$ such that $\phi' = \phi u$. \\
A Gorenstein injective precover $\phi$ is said  to be a cover if any $v \in End_R(G)$ such that $\phi v = \phi$ is an automorphism of $G$.\\
A Gorenstein injective precover $\phi$ is said  to be special if $ker$ $\phi$ is in the right orthogonal class of that of Gorenstein injective modules, $\mathcal{GI} ^\bot$ (where $\mathcal{GI} ^\bot = \{ M | Ext^1(G, M)=0$, for all Gorenstein injective modules $G$ $\}$).
\end{definition}

As mentioned, the importance of the Gorenstein injective (pre)covers comes from the fact that they allow defining the Gorenstein injective resolutions: if the ring $R$ is such that every $R$-module $M$ has a Gorenstein injective precover then for every $M$ there exists a $Hom(\mathcal{GI}, -)$ exact complex $\ldots \rightarrow G_1 \rightarrow G_0 \rightarrow M \rightarrow 0$ with all $G_i$ Gorenstein injective modules. This is equivalent to $G_0 \rightarrow M$, and each $G_i \rightarrow Ker( G_{i-1} \rightarrow G_{i-2})$ being Gorenstein injective precovers. Such a complex is called a Gorenstein injective resolution of $M$; it is unique up to homotopy so it can be used to compute right derived functors of $Hom$.\\

If $\mathcal{GI}$ is a covering class, then working with a $\mathcal{GI}$-cover at every step, one can construct a minimal Gorenstein injective resolution of $M$ (such a minimal resolution is unique up to an isomorphism).

We will also use duality pairs, so we recall their definition.
\begin{definition} (\cite{holm:10:duality})
A \emph{duality pair} over $R$ is a pair $(\mathcal{M},\mathcal{C})$, where $\mathcal{M}$ is a class of left $R$-modules and $\mathcal{C}$ is a class of right $R$-modules, satisfying the following conditions:
\begin{enumerate}
\item $M \in \mathcal{M}$ if and only if $M^+ \in \mathcal{C}$ (where $M^+$ is the character module of $M$, $M^+ = Hom_Z (M, Q/Z)$).
\item $\mathcal{C}$ is closed under direct summands and finite direct sums.
\end{enumerate}
\end{definition}

A duality pair $(\mathcal{M},\mathcal{C})$ is called (co)product closed if the class $\mathcal{M}$ is
closed under (co)products in the category $R-Mod$.

\begin{theorem}\cite[Theorem 3.1]{holm:10:duality}\label{them-duality pair purity}
Let $(\mathcal{M},\mathcal{C})$ be a duality pair. Then the following hold:
\begin{enumerate}
\item $\mathcal{M}$ is closed under pure submodules, pure quotients, and pure extensions.
\item If $(\mathcal{M},\mathcal{C})$ is coproduct-closed then M is covering.
\end{enumerate}
\end{theorem}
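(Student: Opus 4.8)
The plan is to exploit the character-module functor $(-)^+ = \Hom_{\mathbb{Z}}(-,\mathbb{Q}/\mathbb{Z})$, which is exact and, crucially, carries a pure-exact sequence of left modules to a \emph{split}-exact sequence of right modules. For part (1), I would start with a pure-exact sequence $0 \to A \to B \to C \to 0$. Dualizing produces a split-exact sequence $0 \to C^+ \to B^+ \to A^+ \to 0$, so $A^+$ and $C^+$ are direct summands of $B^+$, and $B^+ \cong A^+ \oplus C^+$. If $B \in \mathcal{M}$, then $B^+ \in \mathcal{C}$ by condition (1) of a duality pair; since $\mathcal{C}$ is closed under direct summands, $A^+, C^+ \in \mathcal{C}$, whence $A, C \in \mathcal{M}$ --- this gives closure under pure submodules and pure quotients. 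Conversely, if $A, C \in \mathcal{M}$ then $A^+, C^+ \in \mathcal{C}$, and since $\mathcal{C}$ is closed under finite direct sums, $B^+ \cong A^+ \oplus C^+ \in \mathcal{C}$, so $B \in \mathcal{M}$; this is closure under pure extensions. Thus part (1) is a formal consequence of the purity--splitting correspondence together with the two closure properties of $\mathcal{C}$.

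For part (2), I would first upgrade the purity closure from part (1) to closure under direct limits. The key input is that every direct limit is a pure quotient of a coproduct: for a directed system $(M_i)$ there is a pure-exact sequence $0 \to K \to \bigoplus_i M_i \to \varinjlim M_i \to 0$. Since $\mathcal{M}$ is now assumed coproduct-closed, $\bigoplus_i M_i \in \mathcal{M}$; and since $\mathcal{M}$ is closed under pure quotients by part (1), $\varinjlim M_i \in \mathcal{M}$. Hence $\mathcal{M}$ is closed under direct limits.

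It remains to pass from ``closed under direct limits and coproducts'' to ``covering,'' and this is where I expect the real work to lie. The strategy is to produce a \emph{set} $\mathcal{S} \subseteq \mathcal{M}$ of representatives such that every module in $\mathcal{M}$ is a direct limit of members of $\mathcal{S}$, and then to invoke Enochs' theorem (quoted in the introduction) that a precovering class closed under direct limits is covering. To build $\mathcal{S}$, I would fix a cardinal $\kappa$ large relative to $|R|$ and use the standard purity estimate that every subset of size $\le \kappa$ of a module is contained in a pure submodule of size $\le \kappa$; closure under pure submodules (part (1)) makes these small pure submodules members of $\mathcal{M}$, and each $M \in \mathcal{M}$ is then the directed union of such submodules. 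Taking $\mathcal{S}$ to be a representative set of the members of $\mathcal{M}$ of cardinality $\le \kappa$ supplies the generating set, and precovering of a module $N$ follows by mapping the coproduct of all $G \to N$ with $G \in \mathcal{S}$ onto $N$, using coproduct-closure. The main obstacle is this final assembly step: verifying that the coproduct map is genuinely a precover, i.e. that an arbitrary map from a member of $\mathcal{M}$ factors through the small pure submodules and that these factorizations are compatible enough to glue in the limit, together with pinning down $\kappa$ so that the relevant submodules are simultaneously pure and of bounded size. Once precovering is in hand, Enochs' result closes the argument.
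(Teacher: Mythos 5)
This statement is quoted in the paper from Holm--J{\o}rgensen \cite[Theorem 3.1]{holm:10:duality}; the paper gives no proof of its own, so the only comparison available is with the argument in that source. Your part (1) is correct and is exactly the standard argument: the character dual of a pure-exact sequence is split-exact, so $B^+\cong A^+\oplus C^+$, and the two closure conditions on $\mathcal{C}$ (direct summands, finite direct sums) translate precisely into closure of $\mathcal{M}$ under pure submodules, pure quotients, and pure extensions. The first half of your part (2) is also fine: the canonical presentation $0\to K\to\bigoplus_i M_i\to\varinjlim M_i\to 0$ is pure, so coproduct-closure plus closure under pure quotients gives closure under direct limits.

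The gap is in the final step of part (2), and it is exactly the one you flag yourself. Writing $M\in\mathcal{M}$ as the directed union of its pure submodules $M_i$ of cardinality at most $\kappa$, each $f|_{M_i}$ does factor through the candidate precover $\bigoplus_{G\in\mathcal{S}}G^{(\Hom(G,N))}\to N$, but for $i\le j$ the two factorizations land in \emph{different} coordinates of the coproduct and there is no reason for them to be compatible; they do not glue to a factorization of $f$. Consequently the naive coproduct map need not be an $\mathcal{M}$-precover, and the route ``precover first, then Enochs' direct-limit theorem'' does not close as described. The missing ingredient is El Bashir's theorem (R.~El Bashir, \emph{Covers and directed colimits}, Algebr.\ Represent.\ Theory 9 (2006)): a class closed under direct limits in which every member is a direct limit of members of a fixed set is covering; equivalently, a class closed under pure epimorphic images and direct sums is covering. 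That theorem is precisely what Holm and J{\o}rgensen invoke at this point, and its proof is a genuinely different (transfinite) construction rather than the coproduct-precover argument. So your proof is correct and standard through part (1) and the direct-limit reduction, but part (2) needs El Bashir's result (or an equivalent) to be completed; as written it stops at the true difficulty rather than resolving it.
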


\section{results}

We start by showing that $\mathcal{GI}$ being closed under direct limits implies that it is a covering class. The result follows from \cite[Theorem 3.1]{holm:10:duality}, \cite[Lemma 2]{iacob2023} and \cite[Proposition 2]{iacob2023}.

\begin{proposition} (\cite[Proposition 2]{iacob2023})
 If the class of Gorenstein injective left $R$-modules
is closed under direct limits, then the ring $R$ is left noetherian and
the character module of every Gorenstein injective left $R$-module is a
Gorenstein flat right $R$-module.
\end{proposition}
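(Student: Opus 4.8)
The statement has two conclusions -- that $R$ is left noetherian and that $M^{+}$ is Gorenstein flat for every Gorenstein injective $M$ -- and the plan is to establish them in that order, since the second reuses the noetherian hypothesis. Throughout I would exploit that every injective module is Gorenstein injective, so that closure of $\mathcal{GI}$ under direct limits forces $\mathcal{GI}$ to contain every direct limit, and in particular every direct sum, of injective left $R$-modules.

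For the noetherian conclusion I would reduce, via the Bass--Papp theorem, to showing that an arbitrary direct sum $\bigoplus_i E_i$ of injective left $R$-modules is injective. Such a sum is the direct limit of its finite subsums, each of which is injective and hence Gorenstein injective, so by hypothesis $\bigoplus_i E_i \in \mathcal{GI}$. On the other hand, a direct limit of injective modules is always FP-injective (absolutely pure), since that class is closed under direct limits. The crux is therefore to upgrade ``Gorenstein injective and FP-injective'' to ``injective'': I would do this by passing to character modules, using that $(\bigoplus_i E_i)^{+}\cong \prod_i E_i^{+}$ together with the adjunction $(A\otimes_R B)^{+}\cong \Hom_R(B,A^{+})$ and the complete injective resolution of $\bigoplus_i E_i$, so that the purity forces the canonical inclusion into an injective hull to split. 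I expect this to be \textbf{the main obstacle}, because FP-injectivity and Gorenstein injectivity are individually far weaker than injectivity, and the argument must genuinely convert the direct-limit hypothesis (not merely closure under direct sums) into the required splitting.

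Granting that $R$ is left noetherian, I would obtain the second conclusion as follows. Fix $M\in\mathcal{GI}$ with a complete injective resolution $\mathbf{E}=\cdots\to E_{1}\to E_{0}\to E_{-1}\to\cdots$, so that $\mathbf{E}$ is an exact complex of injectives with $M=Z_{0}\mathbf{E}$ and $\Hom(I,\mathbf{E})$ exact for every injective $I$. Applying the exact functor $(-)^{+}$ yields an exact complex $\mathbf{E}^{+}$ of \emph{right} $R$-modules in which $M^{+}$ is a cycle; and since $R$ is left noetherian, the character module of each injective $E_{i}$ is flat, so $\mathbf{E}^{+}$ is an exact complex of flat right $R$-modules. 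It then remains only to verify the total-acyclicity condition defining Gorenstein flatness, namely that $\mathbf{E}^{+}\otimes_{R}J$ is exact for every injective left $R$-module $J$.

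For that last point I would use the natural isomorphism $(\mathbf{E}^{+}\otimes_{R}J)^{+}\cong \Hom_{R}(J,\mathbf{E}^{++})$ and the fact that $(-)^{+}$ reflects exactness, reducing the claim to exactness of $\Hom_{R}(J,\mathbf{E}^{++})$. Since each $E_{i}^{+}$ is flat, each $E_{i}^{++}$ is injective, so $\mathbf{E}^{++}$ is again an exact complex of injective left modules; the remaining work is to transfer total acyclicity from $\mathbf{E}$ to $\mathbf{E}^{++}$ along the degreewise pure exact sequence $0\to\mathbf{E}\to\mathbf{E}^{++}\to\mathbf{C}\to 0$, where noetherianness guarantees that $\Hom_{R}(J,-)$ of an injective $J$ behaves well with respect to this purity. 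Concluding that $\Hom_{R}(J,\mathbf{E}^{++})$ is exact then shows that $M^{+}$ is Gorenstein flat, completing the proof.
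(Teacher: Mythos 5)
The paper itself offers no proof of this proposition (it is quoted from \cite[Proposition 2]{iacob2023}), so your proposal has to stand on its own, and both halves have genuine gaps. For the noetherian half, the setup is fine: $\oplus_i E_i$ is the direct limit of its finite (injective, hence Gorenstein injective) subsums, so it lies in $\mathcal{GI}$, and by Bass--Papp it suffices to show it is injective. But the step you yourself flag as the crux --- upgrading ``Gorenstein injective and FP-injective'' to ``injective'' because ``purity forces the canonical inclusion into an injective hull to split'' --- is not an argument: being a pure submodule of an injective module is precisely what it means to be absolutely pure, and such inclusions split for all modules only when the ring is already noetherian, so as stated the step is circular. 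The standard route (and the one the paper itself uses in its Lemma 3 for the special-precovering case) avoids FP-injectivity entirely: every injective $I$ lies in ${}^\bot\mathcal{GI}$ (from the $Hom(I,\mathbf{E})$-exactness in the definition of Gorenstein injective), ${}^\bot\mathcal{GI}$ is closed under direct sums since $Ext^1(\oplus_i E_i,G)\cong\prod_i Ext^1(E_i,G)$, hence $\oplus_i E_i\in\mathcal{GI}\cap{}^\bot\mathcal{GI}$; writing $0\to K\to E\to\oplus_i E_i\to 0$ with $E$ injective and $K$ Gorenstein injective from a complete resolution, $Ext^1(\oplus_i E_i,K)=0$ splits the sequence and $\oplus_i E_i$ is injective.

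The second half is the more serious problem: you attempt to deduce that character modules of Gorenstein injectives are Gorenstein flat from left noetherianness alone, never using the direct-limit hypothesis again. That cannot succeed: the paper's Theorem 3 and \cite[Theorem 2]{iacob2023} treat ``$R$ left noetherian'' and the character-module condition as two independent ingredients of condition (4), and collapsing the second into the first would settle the open question of whether $\mathcal{GI}$ is covering over every left noetherian ring. Concretely, two steps fail. First, your reduction of the exactness of $\mathbf{E}^+\otimes_R J$ to that of $Hom_R(J,\mathbf{E}^{++})$ is vacuous: by the adjunction $(\mathbf{E}^+\otimes_R J)^+\cong Hom_R(J,\mathbf{E}^{++})$ and faithful exactness of $(-)^+$, these are the same statement, so nothing has been gained. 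Second, the claim that total acyclicity transfers along $0\to\mathbf{E}\to\mathbf{E}^{++}\to\mathbf{C}\to 0$ because ``noetherianness guarantees that $Hom_R(J,-)$ behaves well with respect to this purity'' has no justification: $Hom_R(J,-)$ preserves pure exact sequences only for pure-projective $J$, which an injective module over a noetherian ring generally is not, and even granting the short exact sequence of Hom-complexes there is no reason for $Hom_R(J,\mathbf{C})$ to be exact. A correct proof of this half must invoke the closure of $\mathcal{GI}$ under direct limits again, not just the noetherianness it implies.
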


\begin{lemma} (\cite[Lemma 2]{iacob2023})
Let $R$ be a left noetherian
ring such that the character module of every Gorenstein injective left
$R$-module is a Gorenstein flat right R-module. Then $(\mathcal{GI},\mathcal{GF})$ is a duality pair.
\end{lemma}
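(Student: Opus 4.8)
The plan is to verify directly the two defining conditions of a duality pair for $(\mathcal{GI},\mathcal{GF})$. Condition (2) --- that $\mathcal{GF}$ is closed under direct summands and finite direct sums --- is a standard property of Gorenstein flat modules valid over any ring (a finite direct sum of complete flat resolutions is again one, and the class is closed under summands), so I would dispose of it by citation. The substance is condition (1): that a left $R$-module $M$ is Gorenstein injective if and only if its character module $M^+ = Hom_{\mathbb Z}(M,\mathbb Q/\mathbb Z)$ is a Gorenstein flat right $R$-module. The forward implication ``$M\in\mathcal{GI}\Rightarrow M^+\in\mathcal{GF}$'' is exactly the standing hypothesis on $R$, so nothing is needed there; all the work is in the converse.

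For the converse, assume $M^+\in\mathcal{GF}$ and fix a complete flat resolution $\mathbf F=\cdots\to F_1\to F_0\to F_{-1}\to\cdots$ of flat right $R$-modules with $M^+\cong Z_0\mathbf F$ and with $I\otimes_R\mathbf F$ exact for every injective left $R$-module $I$. First I would dualize: applying the exact contravariant functor $(-)^+$ yields an exact complex $\mathbf F^+$ whose terms $F_i^{\,+}$ are injective left $R$-modules (the character module of a flat module is injective over any ring), and $M^{++}$ appears as a cycle of $\mathbf F^+$. Moreover, for every injective $I$ the adjunction $Hom_R(I,\mathbf F^+)\cong(\mathbf F\otimes_R I)^+$ together with exactness of $\mathbf F\otimes_R I$ shows $Hom_R(I,\mathbf F^+)$ is exact. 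Hence $\mathbf F^+$ is a complete injective resolution and $M^{++}$ is Gorenstein injective. This step is purely formal and uses neither the noetherian nor the character-module hypothesis.

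It then remains to descend Gorenstein injectivity from $M^{++}$ to $M$ along the canonical pure monomorphism $\iota\colon M\to M^{++}$, and this is the main obstacle. My plan is to prove that, over a left noetherian ring, $\mathcal{GI}$ is closed under pure submodules, applied to the pure exact sequence $0\to M\to M^{++}\to C\to 0$. To feed this I would exploit the dual sequence: purity makes $0\to C^+\to M^{+++}\to M^+\to 0$ split, so $M^{+++}\cong M^+\oplus C^+$, and since $M^+\in\mathcal{GF}$ and $\mathcal{GF}$ is closed under summands we also get $C^+\in\mathcal{GF}$, whence $C^{++}\in\mathcal{GI}$ by the computation above. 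The delicate point is that $\iota$ splits only when $M$ is pure-injective, so closure of $\mathcal{GI}$ under summands does not suffice; one genuinely needs the pure-submodule closure, and I expect this to be where the left noetherian hypothesis (stability of injectives, hence of complete injective resolutions, under the relevant sums) and the character-module hypothesis are really used --- by splicing the complete injective resolutions of $M^{++}$ and $C$ and extracting one in which $M$ itself is a cycle. Once $M\in\mathcal{GI}$ is obtained, condition (1) holds and $(\mathcal{GI},\mathcal{GF})$ is a duality pair.
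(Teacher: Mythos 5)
There is a genuine gap at the decisive step. The parts you do carry out are fine: condition (2) for $\mathcal{GF}$, the forward implication (which is literally the standing hypothesis), and the purely formal computation showing that $M^+\in\mathcal{GF}$ forces $M^{++}\in\mathcal{GI}$ via $\Hom_R(I,\mathbf F^+)\cong(\mathbf F\otimes_R I)^+$. But the whole content of the converse is the descent from $M^{++}$ to $M$, and your proposal does not actually perform it. The tool you plan to use --- that $\mathcal{GI}$ is closed under pure submodules over a left noetherian ring --- is precisely one of the \emph{consequences} of $(\mathcal{GI},\mathcal{GF})$ being a duality pair (it is item (1) of the Holm--J{\o}rgensen theorem quoted as Theorem 1 in this paper), so invoking it to establish the duality pair is circular unless you prove it independently; you give neither a proof nor a reference, and no such closure is known for arbitrary left noetherian rings. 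Your fallback sketch, ``splicing the complete injective resolutions of $M^{++}$ and $C$ and extracting one in which $M$ is a cycle,'' does not work as described: a Horseshoe-type splice along $0\to M\to M^{++}\to C\to 0$ produces a resolution of the \emph{middle} term from resolutions of the outer terms, not of the subobject $M$; and even granting $M^{++},C\in\mathcal{GI}$ (note you only obtain $C^{++}\in\mathcal{GI}$, not $C\in\mathcal{GI}$), concluding $M\in\mathcal{GI}$ would require $\mathcal{GI}$ to be closed under kernels of epimorphisms, whereas the general closure property (Lemma 4 of the paper) goes in the opposite direction, namely cokernels of monomorphisms.

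For comparison: the paper itself offers no argument for this statement --- it is imported wholesale from \cite[Lemma 2]{iacob2023} --- so the expected ``proof'' here is the citation. If you want to prove it from scratch, the noetherian and character-module hypotheses must enter exactly where you flagged them, in constructing the left half of a complete injective resolution of $M$ itself (an exact, $\Hom(Inj,-)$-exact complex $\cdots\to E_1\to E_0\to M\to 0$) together with the vanishing $Ext^{i}(E,M)=0$ for injective $E$; none of this follows from $M^{++}\in\mathcal{GI}$ by the formal manipulations you describe.
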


\begin{proposition}
If the class of Gorenstein injective left $R$-modules is closed under direct limits then $\mathcal{GI}$ is a covering class.
\end{proposition}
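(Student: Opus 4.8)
The plan is to reduce the statement to the covering criterion for coproduct-closed duality pairs recorded in Theorem 1. Assume that $\mathcal{GI}$ is closed under direct limits. First I would invoke Proposition 1 to conclude that $R$ is left noetherian and that the character module $M^+$ of every Gorenstein injective left $R$-module $M$ is a Gorenstein flat right $R$-module. With these two hypotheses in hand, Lemma 1 applies directly and tells us that $(\mathcal{GI},\mathcal{GF})$ is a duality pair.

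It remains to upgrade this duality pair to a coproduct-closed one, that is, to show that $\mathcal{GI}$ is closed under arbitrary direct sums; once this is done, part (2) of Theorem 1 immediately yields that $\mathcal{GI}$ is covering. This is the only step not handed to us directly by the cited results, and it is the crux of the argument. The key observation is that any coproduct $\bigoplus_{i \in I} M_i$ of Gorenstein injective modules is the direct limit, taken over the directed set of finite subsets $F \subseteq I$, of the finite partial sums $\bigoplus_{i \in F} M_i$. Now $\mathcal{GI}$ is closed under finite direct sums, since for Gorenstein injective $M$ and $N$ the direct sum of the two witnessing totally acyclic complexes of injectives is again an exact complex of injectives that stays $\Hom(I,-)$-exact (because $\Hom(I,-)$ commutes with finite direct sums) and has $Z_0$ equal to $M \oplus N$. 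Hence each partial sum $\bigoplus_{i \in F} M_i$ lies in $\mathcal{GI}$, and the assumed closure of $\mathcal{GI}$ under direct limits then forces the full coproduct $\bigoplus_{i \in I} M_i$ to be Gorenstein injective as well.

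Therefore $(\mathcal{GI},\mathcal{GF})$ is a coproduct-closed duality pair, and part (2) of Theorem 1 (that is, \cite[Theorem 3.1]{holm:10:duality}) gives that $\mathcal{GI}$ is a covering class, completing the proof. I do not expect a genuine obstacle here, since the deep inputs—the noetherian and Gorenstein-flat conditions of Proposition 1 and the duality-pair structure of Lemma 1—are quoted wholesale; the only point requiring care is the passage from closure under direct limits to closure under coproducts, which rests on the elementary fact that an arbitrary coproduct is the direct limit of its finite sub-coproducts, combined with the stability of $\mathcal{GI}$ under finite direct sums. It is worth noting that one must restrict to \emph{finite} direct sums in this last step, precisely because $\Hom(I,-)$ need not commute with infinite coproducts, which is exactly why the direct-limit hypothesis is the natural one to assume.
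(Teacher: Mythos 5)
Your proof is correct and follows essentially the same route as the paper: combine Proposition 1 and Lemma 1 to get the duality pair $(\mathcal{GI},\mathcal{GF})$, check coproduct-closure, and apply Theorem 1(2). The only difference is that you explicitly justify closure under direct sums (finite direct sums plus the direct-limit hypothesis), a step the paper asserts without comment; your justification is the natural one and is correct.
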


\begin{proof}
By \cite[Lemma 2]{iacob2023} and \cite[Proposition 2]{iacob2023}, $\mathcal{GI}$ is the left half of a duality pair. Since $\mathcal{GI}$ is also closed under direct sums, it follows that it is a covering class (by \cite[Theorem 3.1]{holm:10:duality}).
\end{proof}


We prove that if the class of Gorenstein injective modules is special precovering then it is closed under direct limits. In particular, this is the case when $\mathcal{GI}$ is covering. We use the following result.\\

\begin{lemma} (\cite[Lemma 2.8]{bazzoni2024})
Let $\kappa$ be an infinite regular cardinal and $\mathcal{N} = (N_{\alpha},g_{{\alpha}{\beta}}: N_{\beta}\rightarrow N_{\alpha} , \beta < \alpha < \kappa)$ be a well-ordered $\kappa$-continuous directed system of modules. Assume that $N_0 = 0$. Put $C = \underrightarrow{lim} N$ and $N = \oplus_{\alpha < \kappa} N_{\alpha}$. Then the canonical ($\kappa$-pure) presentation of $C$
$$ \epsilon: 0 \rightarrow K \xrightarrow{\subseteq} N \xrightarrow{f} C \rightarrow 0$$
 is the direct limit of the $\kappa$-continuous well-ordered directed system of pure short exact sequences
 $$\varepsilon_{\alpha}: 0 \rightarrow K_{\alpha} \xrightarrow{\subseteq} \oplus_{\beta < \alpha} N_{\beta} \xrightarrow{f_{\alpha}} N_{\alpha '}\rightarrow 0, \alpha < \kappa$$
where we set $\alpha' = \alpha$ if $\alpha$ is limit (including zero), and $\alpha' = \alpha - 1$ if $\alpha$ is a successor ordinal. Furthermore, $f_{\alpha} | N_{\beta} = g_{{\alpha'}{\beta}}$ for each $\beta < \alpha$. Finally, the connecting morphisms between $\varepsilon_{\beta}$ and $\varepsilon_{\alpha}$, for $\beta < \alpha$, comprise (from left to right) of inclusion, canonical split inclusion and $g_{{\alpha '}{\beta '}}$. (We put $g_{{\gamma}{\gamma}} = id_{N_{\gamma}}$ for each $\gamma < \kappa$.)\\
 In particular, setting $K_{\kappa} = K$, we get a pure filtration $\mathfrak{K }= (K_{\alpha} , \alpha \le \kappa)$ of $K $.
\end{lemma}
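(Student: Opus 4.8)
The plan is to build the system $(\varepsilon_\alpha)$ by hand, verify that it is $\kappa$-continuous with exactly the stated connecting maps, and then read off every assertion of the lemma from the single permanence fact that a directed colimit of pure-exact sequences is again pure-exact.

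First I would define, for each $\alpha < \kappa$, the map $f_\alpha \colon \bigoplus_{\beta<\alpha} N_\beta \to N_{\alpha'}$ by $f_\alpha|_{N_\beta} = g_{\alpha'\beta}$ (this is legitimate since $\beta < \alpha$ forces $\beta \le \alpha'$), and set $K_\alpha = \Ker f_\alpha$. Surjectivity of $f_\alpha$ is then immediate: if $\alpha = \gamma+1$, then $N_\gamma = N_{\alpha'}$ is itself a summand on which $f_\alpha$ acts as $g_{\gamma\gamma}=id$, so $\varepsilon_\alpha$ is in fact split; and if $\alpha$ is a limit, then $f_\alpha$ is precisely the canonical surjection of $\bigoplus_{\beta<\alpha}N_\beta$ onto the colimit $N_\alpha = \rlim_{\beta<\alpha}N_\beta$ supplied by $\kappa$-continuity. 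For $\beta<\alpha$ I would take the connecting morphism $\varepsilon_\beta\to\varepsilon_\alpha$ to be the canonical split inclusion $\bigoplus_{\delta<\beta}N_\delta\hookrightarrow\bigoplus_{\delta<\alpha}N_\delta$ in the middle and $g_{\alpha'\beta'}$ on the right; commutativity of the right-hand square reduces on each summand $N_\delta$ to the single functoriality identity $g_{\alpha'\beta'}g_{\beta'\delta}=g_{\alpha'\delta}$, and the left-hand map is then forced to be the induced inclusion $K_\beta\hookrightarrow K_\alpha$.

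Next I would establish $\kappa$-continuity, i.e. $\varepsilon_\alpha = \rlim_{\delta<\alpha}\varepsilon_\delta$ for limit $\alpha$. The middle colimit is $\rlim_{\delta<\alpha}\bigoplus_{\gamma<\delta}N_\gamma=\bigoplus_{\gamma<\alpha}N_\gamma$; the right colimit is $\rlim_{\delta<\alpha}N_{\delta'}=\rlim_{\gamma<\alpha}N_\gamma=N_\alpha$, since $\delta\mapsto\delta'$ is monotone and cofinal in $\alpha$ and hence merely reindexes the original system; and the left colimit is $K_\alpha$ by exactness of $\rlim$ applied to the three rows. With continuity in hand, purity of every $\varepsilon_\alpha$ follows by transfinite induction: successor stages are split, hence pure, while a limit stage is a directed colimit of the inductively pure earlier sequences and so is pure because $-\otimes M$ commutes with $\rlim$ and $\rlim$ is exact. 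Passing to the colimit over all $\alpha<\kappa$ then recovers $\epsilon$, the middle term becoming $\bigoplus_{\beta<\kappa}N_\beta=N$, the right term $\rlim_{\gamma<\kappa}N_\gamma=C$, and the left term $K$ (again by exactness of $\rlim$), which justifies setting $K_\kappa=K$; being a directed colimit of pure sequences, $\epsilon$ is itself $\kappa$-pure.

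Finally, for the filtration statement I would observe that the left connecting maps are inclusions, continuous at limits by the left-colimit computation above, so $\mathfrak{K}=(K_\alpha,\alpha\le\kappa)$ is a continuous chain. To see it is pure, I would identify the successor quotient: the splitting $K_{\alpha+1}\cong\bigoplus_{\beta<\alpha}N_\beta$ carries $K_\alpha$ onto $\Ker f_\alpha$, so $K_{\alpha+1}/K_\alpha\cong N_{\alpha'}$ and the inclusion $K_\alpha\hookrightarrow K_{\alpha+1}$ is pure exactly because $\varepsilon_\alpha$ is pure; purity of the inclusions $K_\alpha\hookrightarrow K$ then follows by transitivity of purity along a continuous chain. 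The main obstacle is not any single deep step but the transfinite bookkeeping around the $\alpha'$-convention — in particular verifying that the reindexing $\delta\mapsto\delta'$ makes the right-hand colimits at limit stages agree with $N_\alpha$, which is what lets the induction for purity close; once that is arranged, everything rests on the permanence of pure-exactness under direct limits.
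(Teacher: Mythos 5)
The paper offers no proof of this lemma: it is quoted verbatim from Bazzoni--Saroch \cite[Lemma 2.8]{bazzoni2024} and used as a black box, so there is no in-text argument to compare yours against. Your reconstruction is nevertheless correct and is essentially the standard one: define $f_\alpha$ summandwise by $g_{\alpha'\beta}$, observe that successor stages split while limit stages are the canonical presentations of the subcolimits, verify $\kappa$-continuity via the monotone cofinal reindexing $\delta\mapsto\delta'$, and propagate purity through limit stages using exactness of directed colimits and their commutation with tensor products; the identification $K_{\alpha+1}/K_\alpha\simeq N_{\alpha'}$ via the splitting of $\varepsilon_{\alpha+1}$ is also the right way to get the pure filtration, and it is exactly what Proposition 3 of the paper consumes. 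The only point where your justification falls short of the literal statement is the parenthetical ``$\kappa$-pure'': a directed colimit of pure sequences gives you ordinary purity of $\epsilon$, whereas $\kappa$-purity (every map from a $<\kappa$-presented module into $C$ lifts to $N$, which follows directly from $\kappa$-directedness of the system) is a stronger, separately proved property of the canonical presentation. Since only ordinary purity of $\epsilon$ and of $\mathfrak{K}$ is used later in the paper, this imprecision has no downstream effect.
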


\begin{proposition}
Let $\mathcal{W}$ be a class of modules that is closed under direct summands, under taking cokernels of pure monomorphisms, and under pure transfinite extensions. If $\mathcal{W}$ is closed under direct sums then $\mathcal{W}$ is closed under direct limits.
\end{proposition}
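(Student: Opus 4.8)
The plan is to realize the direct limit through the canonical presentation supplied by Lemma 2 and then to trap the kernel of that presentation between the two remaining closure conditions. First, since $\mathcal{W}$ is closed under direct sums, I would reduce to the case of a well-ordered $\kappa$-continuous directed system $\mathcal{N}=(N_{\alpha},g_{\alpha\beta})_{\alpha<\kappa}$ with $N_{0}=0$ and every $N_{\alpha}\in\mathcal{W}$ ($\kappa$ an infinite regular cardinal): an arbitrary direct limit can be presented as the direct limit of such a system whose terms still lie in $\mathcal{W}$, being built as direct sums of the given modules, and closure under direct summands is what keeps these terms inside $\mathcal{W}$. Writing $C=\rlim\mathcal{N}$ and $N=\bigoplus_{\alpha<\kappa}N_{\alpha}$, Lemma 2 then gives a pure exact sequence
\[ 0\longrightarrow K\longrightarrow N\stackrel{f}{\longrightarrow}C\longrightarrow 0, \]
together with a pure filtration $\mathfrak{K}=(K_{\alpha})_{\alpha\le\kappa}$ of $K$ arising from the kernels of the pure sequences $\varepsilon_{\alpha}\colon 0\to K_{\alpha}\to\bigoplus_{\beta<\alpha}N_{\beta}\xrightarrow{f_{\alpha}}N_{\alpha'}\to 0$. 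As $N\in\mathcal{W}$ by closure under direct sums and $K\to N$ is a pure monomorphism with cokernel $C$, closure under cokernels of pure monomorphisms reduces the entire statement to proving $K\in\mathcal{W}$.

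The crux is to compute the consecutive quotients of the filtration $\mathfrak{K}$. At a successor stage one has $(\alpha+1)'=\alpha$ and $g_{\alpha\alpha}=\mathrm{id}$, so the map $f_{\alpha+1}\colon\bigoplus_{\beta\le\alpha}N_{\beta}\to N_{\alpha}$ restricts to the identity on the top summand $N_{\alpha}$; consequently the projection deleting that summand identifies $K_{\alpha+1}=\ker f_{\alpha+1}$ with $\bigoplus_{\beta<\alpha}N_{\beta}$, and under this identification $K_{\alpha}$ becomes precisely $\ker f_{\alpha}$. Since $f_{\alpha}$ is onto $N_{\alpha'}$, this gives
\[ K_{\alpha+1}/K_{\alpha}\;\cong\;N_{\alpha'}\in\mathcal{W}. \]
Limit stages contribute no new quotient because $\mathfrak{K}$ is continuous, so the filtration exhibits $K$ as a pure transfinite extension of modules from $\mathcal{W}$. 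Closure under pure transfinite extensions then yields $K\in\mathcal{W}$, and feeding this back into the pure exact sequence above (closure under cokernels of pure monomorphisms) gives $C\in\mathcal{W}$, as required.

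The main obstacle I anticipate is the identification $K_{\alpha+1}/K_{\alpha}\cong N_{\alpha'}$: the surrounding steps are formal bookkeeping once Lemma 2 is in hand, but this isomorphism is exactly what converts the abstract ``pure filtration of $K$'' promised by the lemma into a concrete pure transfinite extension whose factors visibly belong to $\mathcal{W}$. A secondary point deserving care is the opening reduction to a well-ordered $\kappa$-continuous system with all terms in $\mathcal{W}$, since Lemma 2 applies only to systems of that shape; this is where closure under direct sums (and, for the bookkeeping, under direct summands) is indispensable, and it must be justified before the rest of the argument can run.
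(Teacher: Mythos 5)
Your proposal is correct and follows essentially the same route as the paper: apply Lemma~2, show that the kernel $K$ of the canonical pure presentation carries a pure $\mathcal{W}$-filtration with factors among the $N_{\alpha}$, and finish with closure under pure transfinite extensions and under cokernels of pure monomorphisms. The only (harmless) difference is at successor stages, where you identify $K_{\alpha+1}/K_{\alpha}\cong N_{\alpha'}$ directly, whereas the paper observes that $\varepsilon_{\alpha}$ splits, so $K_{\alpha}$ is a direct summand of $\bigoplus_{\beta<\alpha}N_{\beta}$, and then gets the factor into $\mathcal{W}$ via closure under cokernels of pure monomorphisms.
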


\begin{proof}
Let $\kappa$ be an infinite regular cardinal and $\mathcal{N} = (N_{\alpha}, g_{{\alpha}{\beta}}: N_{\beta}\rightarrow N_{\alpha} , \beta < \alpha < \kappa)$ bea well-ordered) $\kappa$-continuous directed system of modules, with $N_0 = 0$. There is a pure exact sequence $0 \rightarrow K \xrightarrow{\subseteq} N \xrightarrow{f} C \rightarrow 0$, where  $C = \underrightarrow{lim} N$ and $N = \oplus_{\alpha < \kappa} N_{\alpha}$. .\\
As  in Lemma 2, there is a pure filtration of $K = Ker (f)$, $\mathfrak{K }= (K_{\alpha} , \alpha \le \kappa)$ . Since the right connecting map between the short exact sequences $\varepsilon_{\alpha}$ and $\varepsilon_{{\alpha}+1}$ is $Id_{N_{\alpha}}$ (when $\alpha$ is limit), we have that $K_{{\alpha}+1}/K_{\alpha} \simeq N_{\alpha} \in \mathcal{W}$. \\If $\alpha$ is a successor then the exact sequences
 $$\varepsilon_{\alpha}: 0 \rightarrow K_{\alpha} \xrightarrow{\subseteq} \oplus_{\beta < \alpha} N_{\beta} \xrightarrow{f_{\alpha}} N_{\alpha -1}\rightarrow 0$$ is split exact. Therefore $K_{\alpha}$ is a direct summand of $\oplus_{\beta < \alpha} N_{\beta} \in \mathcal{W}$, so $K_{\alpha} \in \mathcal{W}$. The pure exact sequence $0 \rightarrow K_{\alpha} \rightarrow K_{\alpha +1} \rightarrow K_{{\alpha}+1}/K_{\alpha} \rightarrow 0$ with both $K_{\alpha}$ and $K_{\alpha +1}$ in $\mathcal{W}$ gives that $K_{{\alpha}+1}/K_{\alpha}$ is also in $\mathcal{W}$.\\
So $(K_{\alpha})_{\alpha}$ is a pure $\mathcal{W}$-filtration, and therefore $K \in \mathcal{W}$. \\
Since $N$ is also in $\mathcal{W}$, and $\mathcal{W}$ is closed under cokernels of pure monomorphisms it follows that  $C \in \mathcal{W}$.
\end{proof}

Our main application of Proposition 4 is proving that the class of Gorenstein injective modules, $\mathcal{GI}$, is special precovering if and only if it is closed under direct limits.\\

First, some quick remarks:\\

\begin{lemma}
If $\mathcal{GI}$ is special precovering, then $R$ is a left noetherian ring.
\end{lemma}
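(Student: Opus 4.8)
The plan is to deduce left noetherianness from the Bass--Papp theorem: $R$ is left noetherian if and only if every direct sum of injective left $R$-modules is injective. Two standing facts frame the argument. First, over any ring the injective modules are exactly $\mathcal{GI}\cap\mathcal{GI}^\perp$: an injective $E$ is Gorenstein injective and lies in $\mathcal{GI}^\perp$ because $Ext^1(-,E)=0$; conversely, if $M\in\mathcal{GI}\cap\mathcal{GI}^\perp$, then from the totally acyclic complex of injectives having $M$ as a cycle we obtain a short exact sequence $0\to M\to E\to M'\to 0$ with $E$ injective and $M'$ Gorenstein injective, and $M\in\mathcal{GI}^\perp$ forces $Ext^1(M',M)=0$, so the sequence splits and $M$ is a direct summand of $E$, hence injective. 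Second, $\mathcal{GI}$ is closed under direct summands, so the precovering property (which special precovering implies) makes $\mathcal{GI}$ closed under arbitrary direct sums: given injectives $E_i$, a $\mathcal{GI}$-precover $\phi\colon G\to\bigoplus_i E_i$ receives, for each $i$, a factorization of the inclusion $E_i\hookrightarrow\bigoplus_i E_i$ through $\phi$, and by the universal property of the coproduct these assemble into a section of $\phi$; thus $\bigoplus_i E_i$ is a summand of $G\in\mathcal{GI}$ and lies in $\mathcal{GI}$.

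Granting this, I fix a family of injectives $\{E_i\}$, set $E=\bigoplus_i E_i$, and note that $E\in\mathcal{GI}$ by the second fact. By the identity $\mathcal{GI}\cap\mathcal{GI}^\perp=\{\text{injectives}\}$, it then remains only to prove $E\in\mathcal{GI}^\perp$, after which Bass--Papp gives that $R$ is left noetherian. Concretely, the goal becomes $Ext^1(G,E)=0$ for every Gorenstein injective $G$, that is, to show that $Ext^1(G,-)$ does not obstruct the passage from the individual injectives $E_i$ (each visibly in $\mathcal{GI}^\perp$) to their direct sum.

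This last step is where I expect the real difficulty, and where the hypothesis of \emph{special} precovering, as opposed to mere precovering, must genuinely be used: precovering alone yields only $E\in\mathcal{GI}$, and a direct sum of injectives can be Gorenstein injective without being injective over a non-noetherian ring, so special-ness has to enter. My plan is to exploit a special $\mathcal{GI}$-precover $0\to L\to G\to E\to 0$ whose kernel $L$ lies in $\mathcal{GI}^\perp$, together with the closure of $\mathcal{GI}$ under cokernels of monomorphisms and the pure-filtration description of kernels of canonical direct-limit presentations (Lemma 2), in order to transfer the $\mathcal{GI}^\perp$-membership of the pieces to $E$. An alternative, and perhaps cleaner, route is to adapt the test-module argument behind the known implication ``Gorenstein injective covers $\Rightarrow$ noetherian'' of \cite{christensen:11:beyond}: from a hypothetical non-stabilizing ascending chain of left ideals one builds a module whose special Gorenstein injective precover would be forced, via $\mathcal{GI}\cap\mathcal{GI}^\perp=\{\text{injectives}\}$, to be injective, contradicting the failure of the ascending chain condition. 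Checking that special precovering supplies exactly the orthogonality needed to run either argument is the main obstacle.
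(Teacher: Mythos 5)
Your overall frame (Bass--Papp, plus the observation that a precovering class closed under direct summands is closed under direct sums, so $E=\bigoplus_i E_i\in\mathcal{GI}$) matches the paper, and those steps are carried out correctly. But the proof is not complete, and the difficulty you flag at the end is a genuine gap as you have set things up: you chose to characterize the injectives as $\mathcal{GI}\cap\mathcal{GI}^\bot$, which forces you to prove $Ext^1(G,\bigoplus_i E_i)=0$ for every Gorenstein injective $G$. Since $Ext^1(G,-)$ does not commute with direct sums, this is essentially as hard as the statement you are trying to prove, and neither of the two routes you sketch for it is actually carried out. Your guiding intuition that ``special-ness has to enter'' is also mistaken: a direct sum of injectives that is Gorenstein injective is automatically injective (see below), so mere precovering already suffices, and indeed the paper's proof never uses the special hypothesis.

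The fix is to use the left orthogonal instead of the right one, which is what the paper does. Each injective $A_i$ lies in ${}^\bot\mathcal{GI}$ (immediate from the definition of Gorenstein injective, since $Hom(I,\mathbf{E})$ is exact for every injective $I$), and ${}^\bot\mathcal{GI}$ is closed under arbitrary direct sums for free, because $Ext^1(\bigoplus_i A_i, G)\cong\prod_i Ext^1(A_i,G)$. Hence $\bigoplus_i A_i\in{}^\bot\mathcal{GI}\cap\mathcal{GI}$, and this intersection is exactly the class of injectives by the splitting argument dual to the one you gave: if $M\in\mathcal{GI}\cap{}^\bot\mathcal{GI}$, the totally acyclic complex yields a short exact sequence $0\to M'\to E\to M\to 0$ with $E$ injective and $M'$ Gorenstein injective, and $Ext^1(M,M')=0$ splits it, so $M$ is a direct summand of $E$ and hence injective. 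Thus $\bigoplus_i A_i$ is injective and Bass--Papp applies. With this one change your argument closes; the other ingredients you supplied (the section of the precover assembled via the coproduct property, and the splitting characterization of injectives) are correct.
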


\begin{proof}
Let $(A_i)_{i \in I}$ be a family of injective left $R$-modules. By hypothesis, $\oplus_{i \in I} A_i \in \mathcal{GI}$. Since for each $i$, $A_i \in ^\bot \mathcal{GI}$ and $^\bot \mathcal{GI}$ is closed under direct sums (as the left half of a cotorsion pair), we have that $\oplus_{i \in I} A_i \in ^\bot \mathcal{GI}$. So $\oplus_{i \in I} A_i \in ^\bot \mathcal{GI} \bigcap \mathcal{GI} = Inj$. Since the class of injective left $R$-modules is closed under direct sums, $R$ is left noetherian.
\end{proof}

\begin{lemma}
The class of Gorenstein injective modules is closed under direct summands and under cokernels of monomorphisms.
\end{lemma}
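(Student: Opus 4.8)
The plan is to treat the two assertions separately. Closure under direct summands I would take as known: it is one of the foundational closure properties of the class $\mathcal{GI}$ (together with closure under extensions), and I would cite \cite{enochs:95:gorenstein}. The substantive assertion is that $\mathcal{GI}$ is closed under cokernels of monomorphisms, i.e.\ that $\mathcal{GI}$ is coresolving: if $0 \rightarrow A \rightarrow B \rightarrow C \rightarrow 0$ is exact with $A, B \in \mathcal{GI}$, then $C \in \mathcal{GI}$. My strategy is to reduce this to extension closure and summand closure by a pushout argument, using one further elementary fact read off from the definition: every $G \in \mathcal{GI}$ sits in a short exact sequence $0 \rightarrow G \rightarrow E \rightarrow G' \rightarrow 0$ with $E$ injective and $G' \in \mathcal{GI}$. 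Indeed, if $G = Z_0\textbf{E}$ for a complete injective resolution $\textbf{E}$, then $E = E_0$ is injective and the cosyzygy $G' = Z_{-1}\textbf{E}$ is again a cycle of $\textbf{E}$, hence Gorenstein injective.

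So let $0 \rightarrow A \xrightarrow{f} B \rightarrow C \rightarrow 0$ be exact with $A, B \in \mathcal{GI}$. I first fix a cosyzygy sequence $0 \rightarrow A \xrightarrow{h} E \rightarrow A' \rightarrow 0$ for $A$ as above, with $E$ injective and $A' \in \mathcal{GI}$, and form the pushout $P$ of the span $B \xleftarrow{f} A \xrightarrow{h} E$. Since $h$ is a monomorphism, the canonical map $B \rightarrow P$ is a monomorphism whose cokernel is $A'$, giving a short exact sequence $0 \rightarrow B \rightarrow P \rightarrow A' \rightarrow 0$; as $B, A' \in \mathcal{GI}$ and $\mathcal{GI}$ is closed under extensions, we get $P \in \mathcal{GI}$. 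Since $f$ is a monomorphism, the complementary edge of the pushout yields a short exact sequence $0 \rightarrow E \rightarrow P \rightarrow C \rightarrow 0$, whose cokernel is exactly $C$. This last sequence splits because $E$ is injective, so $C$ is a direct summand of $P \in \mathcal{GI}$; closure under direct summands then gives $C \in \mathcal{GI}$, completing the argument.

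I expect the main delicate point to be purely a matter of citing the right background: closure of $\mathcal{GI}$ under extensions and under direct summands must be invoked over the arbitrary ring $R$ in play, and the cosyzygy extraction must be justified directly from the definition of a Gorenstein injective module (both are routine, but both are essential to the pushout reduction). If a fully self-contained proof were wanted that avoids quoting extension closure, I would instead construct a complete injective resolution of $C$ by hand: the long exact $Ext$ sequence gives $Ext^i(I, C) = 0$ for every injective $I$ and every $i \geq 1$, and then the horseshoe lemma applied to $0 \rightarrow A \rightarrow B \rightarrow C \rightarrow 0$ produces a $Hom(I,-)$-exact right injective coresolution of $C$ which can be spliced with a left injective resolution. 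This route is correct but noticeably more laborious, so I would present the pushout argument instead.
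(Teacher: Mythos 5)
Your proof is correct, but it takes a genuinely different route from the paper. The paper disposes of both claims in two lines by citing \cite{saroch.stovicek}: $\mathcal{GI}$ is the right half of a (hereditary) cotorsion pair over any ring, and the right half of a cotorsion pair is closed under direct summands, while heredity gives closure under cokernels of monomorphisms. You instead reduce the coresolving property to extension closure plus summand closure via the classical cosyzygy--pushout argument: the pushout $P$ of $B \xleftarrow{f} A \xrightarrow{h} E$ sits in $0 \rightarrow B \rightarrow P \rightarrow A' \rightarrow 0$ (so $P \in \mathcal{GI}$ by extension closure) and in $0 \rightarrow E \rightarrow P \rightarrow C \rightarrow 0$, which splits by injectivity of $E$, exhibiting $C$ as a summand of $P$. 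This is a valid and more self-contained argument; what it buys is independence from the Šaroch--Šťovíček completeness machinery, at the cost of having to invoke extension closure of $\mathcal{GI}$ over an arbitrary ring. One caveat on attribution: that extension closure (and summand closure) over arbitrary rings is really Holm's Theorem 2.6 in \cite{holm04} rather than \cite{enochs:95:gorenstein}, and in fact that same theorem states that $\mathcal{GI}$ is injectively coresolving, which already contains the entire lemma — so your argument is essentially a re-proof of the relevant part of Holm's result, and you could alternatively cite it wholesale. The outline of the fallback horseshoe-lemma construction at the end is also sound but, as you say, unnecessary.
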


\begin{proof}
Since $\mathcal{GI}$ is the right half of a hereditary cotorsion pair (by \cite{saroch.stovicek}) it follows that $\mathcal{GI}$ is closed under cokernels of monomorphisms. Also, as the right half of a cotorsion pair, $\mathcal{GI}$ is closed under direct summands.
\end{proof}

 In order to prove that $\mathcal{GI}$ being a specal precoverng covering class implies that it is closed under direct limits, we will also use the following result (this is basically \cite[Proposition 2]{EEI}, with a new proof)\\

\begin{proposition} (\cite[Proposition 2]{EEI})
If every $R$-module has a special Gorenstein injective precover then the class of Gorenstein injective modules is closed under transfinite extensions.
\end{proposition}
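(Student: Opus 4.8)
The plan is to deduce membership in $\mathcal{GI}$ from the splitting of a single special precover sequence, using Eklof's Lemma to produce the required $Ext$-vanishing. So assume every $R$-module has a special Gorenstein injective precover, and let $M$ be a transfinite extension of Gorenstein injective modules: $M$ carries a continuous filtration $(M_{\alpha})_{\alpha \le \lambda}$ with $M_0 = 0$, $M_{\lambda} = M$, $M_{\alpha} = \bigcup_{\beta < \alpha} M_{\beta}$ at limit ordinals, and $M_{\alpha + 1}/M_{\alpha} \in \mathcal{GI}$ for every $\alpha < \lambda$. I want to conclude $M \in \mathcal{GI}$.

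First I would fix a special Gorenstein injective precover of $M$, that is, a short exact sequence
$$0 \rightarrow L \rightarrow G \xrightarrow{\phi} M \rightarrow 0$$
with $G \in \mathcal{GI}$ and $L = \ker \phi \in \mathcal{GI}^{\bot}$ (surjectivity and the condition $\ker \phi \in \mathcal{GI}^{\bot}$ are exactly what the definition of a special precover provides). The target is then to show that this sequence splits: if it does, $M$ is a direct summand of $G \in \mathcal{GI}$, and since $\mathcal{GI}$ is closed under direct summands (Lemma 4) we obtain $M \in \mathcal{GI}$.

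The decisive step is to prove $Ext^1(M, L) = 0$, since that forces the displayed sequence to split. Here I would invoke Eklof's Lemma. Because $L \in \mathcal{GI}^{\bot}$ and each consecutive quotient $M_{\alpha + 1}/M_{\alpha}$ lies in $\mathcal{GI}$, we have $Ext^1(M_{\alpha + 1}/M_{\alpha}, L) = 0$ for every $\alpha < \lambda$, by the very definition of $\mathcal{GI}^{\bot}$. Since $(M_{\alpha})_{\alpha \le \lambda}$ is a continuous filtration of $M$ with these factors, Eklof's Lemma gives $Ext^1(M, L) = 0$, as wanted.

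I expect the only genuine subtlety to be one of orientation rather than computation. The class $\mathcal{GI}$ sits on the right of its defining (hereditary) cotorsion pair, so a filtration need not pass to $\mathcal{GI}$ directly — right-hand classes are not a priori closed under filtrations. The role of the special precover hypothesis is precisely to convert the filtration data into a vanishing of $Ext^1$ in the \emph{first} variable, which is the form Eklof's Lemma controls, and then to turn that vanishing into a splitting. Once this reorientation is in place, the remaining steps (the splitting of a sequence with vanishing $Ext^1$, and closure of $\mathcal{GI}$ under direct summands) are routine.
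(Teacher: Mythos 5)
Your proposal follows the same route as the paper's proof: take a special Gorenstein injective precover of the transfinite extension $M$, use Eklof's Lemma (with the filtration quotients in $\mathcal{GI}$ and the kernel in $\mathcal{GI}^{\bot}$) to get $Ext^1(M,L)=0$, conclude that the precover splits, and finish with closure of $\mathcal{GI}$ under direct summands. There is, however, one step where you assume more than this paper's conventions provide. In Definition 2 a special precover is only required to have kernel in $\mathcal{GI}^{\bot}$; it is \emph{not} required to be an epimorphism, and Gorenstein injective precovers are not surjective in general --- over $\mathbb{Z}$, every Gorenstein injective module is divisible, so every map from one to $\mathbb{Z}$ is zero and the (special) precover $0 \rightarrow \mathbb{Z}$ is as good as it gets. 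So your short exact sequence $0 \rightarrow L \rightarrow G \rightarrow M \rightarrow 0$ does not come for free, and without surjectivity the splitting argument yields nothing: $Ext^1(M,L)=0$ together with a non-surjective $\phi$ does not make $M$ a summand of $G$.

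The paper closes exactly this gap. It notes that $\mathcal{GI}$, being precovering and closed under direct summands, is closed under direct sums, so the direct sum of the terms of the chain is Gorenstein injective and surjects onto the direct limit; any $\mathcal{GI}$-precover of the limit must therefore be surjective. Observe that this argument uses the terms of the chain being Gorenstein injective (i.e., it treats the limit-ordinal step of a chain of Gorenstein injective submodules), with the general transfinite extension following by induction, using closure of $\mathcal{GI}$ under extensions at successor steps. If you keep your filtration formulation --- which is the cleaner way to state the Eklof step --- you should either run that transfinite induction, or otherwise exhibit a Gorenstein injective module mapping onto $M$ before invoking the splitting.
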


\begin{proof}

Let $(G_\alpha, \alpha \le \lambda)$ be a direct system of monomorphisms, with each $G_\alpha \in \mathcal{GI}$, and let $G= \underrightarrow{lim} G_\alpha$. Since, for each $\alpha$, we have that $G_{\alpha} \in ^\bot (\mathcal{GI} ^\bot)$, it follows that $G= \underrightarrow{lim} G_\alpha \in ^\bot (\mathcal{GI} ^\bot)$ by Eklof Lemma (\cite[Theorem 1.2]{eklof}).

The class $\mathcal{GI}$ is precovering and closed under direct summands, so $\mathcal{GI}$ is closed under direct sums (\cite[Lemma 9.14]{trlifaj}). Thus $\oplus _{\alpha \le \lambda} G_{\alpha} \in \mathcal{GI}$.\\
Since there is a short exact sequence $0 \rightarrow K \rightarrow \oplus _{\alpha \le \lambda} G_{\alpha} \rightarrow G \rightarrow 0$ with $\oplus _{\alpha \le \lambda} G_{\alpha}$ Gorenstein injective, it follows that any Gorenstein injective precover of $G$ has to be surjective.\\

The class $\mathcal{GI}$ is special precovering, so there is an exact sequence $0 \rightarrow A \rightarrow \overline{G} \rightarrow G \rightarrow 0$ with $A \in \mathcal{GI}^\bot$ and $\overline{G}$ Gorenstein injective. But $G \in ^\bot (\mathcal{GI} ^\bot)$, so we have that $Ext^1(G, A)=0$. Thus $G$ is a direct summand of $\overline{G}$, therefore $G$ is Gorenstein injective.
\end{proof}

\begin{corollary}
If the class of Gorenstein injective modules is covering, then it closed under transfinite extensions.
\end{corollary}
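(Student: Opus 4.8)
The plan is to reduce the statement to Proposition 4 (\cite[Proposition 2]{EEI}), which already derives closure under transfinite extensions from the existence of special Gorenstein injective precovers. Rather than trying to prove the (generally false) assertion that every Gorenstein injective cover is special, I would produce a special precover only for the modules that actually occur in that proposition, namely the transfinite extensions $G=\rlim G_\alpha$ of Gorenstein injectives. This restriction is essential: covers need not be surjective --- over $\mathbb{Z}$ the class $\mathcal{GI}$ is exactly the class of divisible groups and the cover of $\mathbb{Z}$ is the zero map --- so the kernel of an arbitrary cover need not lie in $\mathcal{GI}^\bot$. The leverage I would exploit is that a transfinite extension is a quotient of a Gorenstein injective module, which forces its cover to be surjective.

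Concretely, I would take a continuous direct system of monomorphisms $(G_\alpha,\alpha\le\lambda)$ with each $G_\alpha\in\mathcal{GI}$ and $G=\rlim G_\alpha$, and first reuse two observations from the proof of Proposition 4. By the Eklof Lemma (\cite{eklof}) one has $G\in {}^\bot(\mathcal{GI}^\bot)$; and since $\mathcal{GI}$ is precovering and closed under direct summands, it is closed under direct sums (\cite[Lemma 9.14]{trlifaj}), so $\oplus_{\alpha\le\lambda}G_\alpha\in\mathcal{GI}$ and the canonical map $\oplus_{\alpha\le\lambda}G_\alpha\to G$ is a surjection from a Gorenstein injective module. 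Invoking the covering hypothesis, I would then choose a Gorenstein injective cover $\phi:\overline{G}\to G$. Because the canonical surjection $\oplus_{\alpha\le\lambda}G_\alpha\to G$ factors through the precover $\phi$, the map $\phi$ is itself surjective.

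Next I would upgrade this cover to a special precover. As the right half of the hereditary cotorsion pair of \cite{saroch.stovicek}, $\mathcal{GI}$ is closed under extensions, so Wakamatsu's Lemma (see \cite{trlifaj}) applied to the surjective cover $\phi$ gives $\ker\phi\in\mathcal{GI}^\bot$. This is precisely the special precover $0\to\ker\phi\to\overline{G}\to G\to 0$ demanded by the proof of Proposition 4: from $G\in {}^\bot(\mathcal{GI}^\bot)$ we get $Ext^1(G,\ker\phi)=0$, the sequence splits, $G$ is a direct summand of $\overline{G}\in\mathcal{GI}$, and therefore $G\in\mathcal{GI}$ by closure under direct summands (Lemma 4).

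The step I expect to require the most care is the passage from ``cover'' to ``special precover'', that is, establishing $\ker\phi\in\mathcal{GI}^\bot$. Surjectivity of $\phi$ is the hinge of this passage: it is what permits Wakamatsu's Lemma to be applied, and it is the reason the argument must be localized to transfinite extensions instead of attempted for all modules at once. Once this is in place, the remainder is a verbatim repetition of the proof of Proposition 4, with the Gorenstein injective cover playing the role of the hypothesized special precover.
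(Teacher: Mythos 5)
Your argument is correct, and at bottom it is the same argument as the paper's: the paper proves the corollary by citing \cite[Corollary 7.2.3]{enochs:00:relative} (Wakamatsu's Lemma for the extension-closed class $\mathcal{GI}$: the kernel of a $\mathcal{GI}$-cover lies in $\mathcal{GI}^\bot$) and then invoking Proposition 4 as a black box; you unpack both citations and re-run the argument inline. The one substantive point of divergence is your premise that the paper's reduction rests on a ``generally false'' assertion because covers need not be surjective. That worry is misplaced on two counts. First, the paper's Definition 2 of a special precover asks only that $\ker \phi \in \mathcal{GI}^\bot$; it does not ask for surjectivity. Second, Wakamatsu's Lemma for covers of an extension-closed class does not require the cover to be surjective: in the standard pushout proof one never uses that $\phi$ is onto, so the kernel of \emph{any} $\mathcal{GI}$-cover lies in $\mathcal{GI}^\bot$. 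In your own $\mathbb{Z}$ example the divisible cover $0 \rightarrow \mathbb{Z}$ has kernel $0 \in \mathcal{GI}^\bot$, hence is special in the paper's sense. Where surjectivity \emph{is} genuinely needed is in the proof of Proposition 4 itself, to get the exact sequence $0 \rightarrow A \rightarrow \overline{G} \rightarrow G \rightarrow 0$; and there it is obtained exactly as you obtain it, by factoring the surjection $\oplus_{\alpha \le \lambda} G_\alpha \rightarrow G$ through the precover. So your localization of the special-precover construction to the modules $G = \rlim G_\alpha$ is a valid, self-contained alternative --- and it has the merit of surviving the stricter convention under which special precovers are required to be surjective --- but the paper's one-line reduction is also sound as written.
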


\begin{proof}
 By \cite[Corollary 7.2.3]{enochs:00:relative}, any Gorenstein injective cover is a special precover, so the result follows from Proposition 4.
\end{proof}



\begin{theorem}
The class of Gorenstein injective modules is special precovering if and only if it is closed under direct limits.
\end{theorem}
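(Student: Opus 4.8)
The plan is to establish the two implications separately, leaning almost entirely on the machinery already assembled. I would dispose of the reverse implication (closed under direct limits implies special precovering) first, as it is immediate, and then concentrate on the forward implication, which carries the real content.

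For the reverse direction, suppose $\mathcal{GI}$ is closed under direct limits. Proposition 2 then gives at once that $\mathcal{GI}$ is a covering class. Invoking \cite[Corollary 7.2.3]{enochs:00:relative}, every Gorenstein injective cover is automatically a special precover, so every module admits a special Gorenstein injective precover; that is, $\mathcal{GI}$ is special precovering. This is the same reasoning already used to deduce the Corollary above, so nothing new is required here.

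For the forward direction, suppose $\mathcal{GI}$ is special precovering. The strategy is to apply Proposition 3 with $\mathcal{W} = \mathcal{GI}$, which reduces the claim to verifying its four hypotheses. Lemma 4 gives that $\mathcal{GI}$ is closed under direct summands and under cokernels of monomorphisms; since every pure monomorphism is in particular a monomorphism, the second property yields closure under cokernels of pure monomorphisms. Proposition 4 shows that a special precovering $\mathcal{GI}$ is closed under transfinite extensions, and a pure transfinite extension is a special case, so $\mathcal{GI}$ is closed under pure transfinite extensions. Finally, a special precovering class is in particular precovering, and being also closed under direct summands it is closed under direct sums by \cite[Lemma 9.14]{trlifaj}. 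With all four hypotheses in hand, Proposition 3 yields that $\mathcal{GI}$ is closed under direct limits.

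The genuine difficulty does not lie in this theorem but in the preparatory Proposition 3, whose argument rests on the Bazzoni filtration Lemma 2: one must know that the canonical presentation $0 \to K \to N \to C \to 0$ of a direct limit $C$ realized over the direct sum $N$ is pure, and that $K$ carries a pure $\mathcal{GI}$-filtration with the correct successive quotients. Granting that, the proof of the theorem itself is bookkeeping, namely aligning the closure properties of $\mathcal{GI}$ recorded in Lemma 4 and Proposition 4 with the hypotheses of Proposition 3, while noting that ``cokernels of monomorphisms'' and ``transfinite extensions'' are the unrestricted, and hence stronger, forms of the pure conditions actually demanded.
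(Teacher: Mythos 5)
Your proposal is correct and follows essentially the same route as the paper: Proposition 2 for the reverse implication, and for the forward implication Proposition 4 (transfinite extensions), Lemma 4 (direct summands and cokernels of monomorphisms), closure under direct sums via precovering plus summand-closure, and then Proposition 3. Your explicit remark that the unrestricted closure properties imply the pure versions required by Proposition 3 is a detail the paper leaves implicit, but the argument is the same.
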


\begin{proof}
  By Proposition 2, if $\mathcal{GI}$ is closed under direct limits, then it is a covering class, hence special precovering.\\
 Conversely, assume that $\mathcal{GI}$ is special precovering. Then, by Proposition 4, $\mathcal{GI}$ is closed under transfinite extensions. \\
It is known that a precovering class of modules that is closed under direct summands is also closed under direct sums. Since $\mathcal{GI}$ is closed under direct summands, direct sums, cokernels of monomorphisms, and transfinite extensions, it follows (by Proposition 3) that $\mathcal{GI}$ is closed under direct limits.
\end{proof}

In \cite{iacob2023} we gave a characterization of the rings for which the class of Gorenstein injective modules is closed under direct limits. Using \cite[Theorem 2]{iacob2023}, and Theorem 2 above we obtain:\\

\begin{theorem}
The following statements are equivalent:\\

(1) The class of Gorenstein injective modules, $\mathcal{GI}$ is covering\\
(2) The class of Gorenstein injective modules, $\mathcal{GI}$ is special precovering.\\
(3) The class of Gorenstein injective modules is closed under direct limits.\\
(4) The ring $R$ is left noetherian and such that the character modules of Gorenstein injectives are Gorenstein flat.
\end{theorem}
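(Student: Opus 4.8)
The plan is to deduce the fourfold equivalence from results already established, by closing a short cycle of implications among (1)--(3) and then grafting the ring-theoretic condition (4) onto that cycle. The key observation is that the equivalence (2) $\Leftrightarrow$ (3) is precisely the content of Theorem 2, so the only genuinely new work is to insert (1) into this cycle and to relate the internal conditions (1)--(3) to condition (4).

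First I would settle the cycle among (1), (2), and (3). The implication (3) $\Rightarrow$ (1) is exactly Proposition 2: closure under direct limits forces $\mathcal{GI}$ to be covering. For (1) $\Rightarrow$ (2) I would invoke \cite[Corollary 7.2.3]{enochs:00:relative} (the same ingredient used in Corollary 1), which guarantees that every Gorenstein injective cover is automatically a special precover; hence a covering class is in particular special precovering. Finally, (2) $\Rightarrow$ (3) is the nontrivial direction of Theorem 2. Chaining $(1) \Rightarrow (2) \Rightarrow (3) \Rightarrow (1)$ then yields $(1) \Leftrightarrow (2) \Leftrightarrow (3)$.

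It remains to attach (4). For (3) $\Rightarrow$ (4) I would quote Proposition 1 directly: if $\mathcal{GI}$ is closed under direct limits then $R$ is left noetherian and the character module of every Gorenstein injective left module is Gorenstein flat. For the converse (4) $\Rightarrow$ (3) I would appeal to \cite[Theorem 2]{iacob2023}, which characterizes the rings over which $\mathcal{GI}$ is closed under direct limits as exactly those satisfying (4). Together these give $(3) \Leftrightarrow (4)$, completing the equivalence of all four statements.

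The main obstacle is not in the bookkeeping of implications, which is routine once the cited machinery is in place, but lies hidden in the direction (4) $\Rightarrow$ (3) carried by \cite[Theorem 2]{iacob2023}: one must show that left noetherianness together with Gorenstein-flatness of character modules forces closure under direct limits. The subtlety there is that being the left half of a duality pair (Lemma 1) yields, via Theorem 1, only closure under pure submodules, pure quotients, and pure extensions, and these purity properties must then be combined with closure under direct sums to build arbitrary direct limits---essentially the mechanism isolated in Proposition 3. I would therefore take care that the cycle is logically non-circular: Theorem 2 is proved independently (through Propositions 3 and 4 and Lemmas 3--4), so reusing it here introduces no vicious circle, and the equivalence $(3) \Leftrightarrow (4)$ rests on \cite[Theorem 2]{iacob2023} rather than on any of the present paper's own implications.
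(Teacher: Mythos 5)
Your proposal is correct and follows essentially the same route as the paper: the cycle $(1)\Rightarrow(2)\Rightarrow(3)\Rightarrow(1)$ via \cite[Corollary 7.2.3]{enochs:00:relative}, Theorem 2, and Proposition 2, with $(3)\Leftrightarrow(4)$ supplied by \cite[Theorem 2]{iacob2023}. The only cosmetic difference is that you split $(3)\Leftrightarrow(4)$ into two halves and cite Proposition 1 for the forward direction, which is the same content.
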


\begin{proof}
(1) $\Rightarrow$ (2) is immediate since a covering class is special precovering (\cite[Corollary 7.2.3]{enochs:00:relative}).\\
(2) $\Rightarrow$ (3) by Theorem 2 above.\\
(3) $\Rightarrow$ (1) by Proposition 2. \\
(3) $\Leftrightarrow$ (4) by \cite[Theorem 2]{iacob2023}.
\end{proof}

The following theorem shows that the result extends to the category of complexes of left $R$-modules $Ch(R)$.

\begin{theorem}
The following are equivalent:\\
(1) The class of Gorenstein injective left $R$-modules, $\mathcal{GI}$, is closed under direct limits.\\
(2) The class of Gorenstein injective left $R$-modules is covering in $R$-Mod.\\
(3) The class of Gorenstein injective complexes is closed under direct limits.\\
(4) The class of Gorenstein injective complexes is covering in $Ch(R)$.\\
(5) The class of Gorenstein injective complexes is special precovering.
\end{theorem}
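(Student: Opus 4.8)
The plan is to establish the five equivalences by reducing the complex statements to the module statements already proved in Theorem 3, using the standard dictionary between Gorenstein injective complexes and Gorenstein injective modules. First I would recall the key structural fact (from Enochs--Garc\'ia Rozas and subsequent work, e.g. Yang--Liu) that a complex $X$ is Gorenstein injective in $Ch(R)$ if and only if each component $X_n$ is a Gorenstein injective $R$-module and $X$ is \emph{exact}, together with the dual condition that $\Hom(E,X)$ stays exact for injective complexes $E$; equivalently, that the class of Gorenstein injective complexes is built componentwise from $\mathcal{GI}$ in a way compatible with limits. The equivalences (1) $\Leftrightarrow$ (2) are exactly the content of Theorem 3 restricted to the module side, so nothing new is needed there. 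For (4) $\Leftrightarrow$ (5), I would invoke the same argument structure as in Theorem 3: a covering class is special precovering by \cite[Corollary 7.2.3]{enochs:00:relative}, and for the converse one reproves the analogue of Theorem 2 in $Ch(R)$ --- Proposition 3 holds verbatim in any finitely accessible (or locally finitely presented) additive category, and $Ch(R)$ is such a category, so once one checks the complex version of the closure hypotheses (direct summands, cokernels of pure monomorphisms, pure transfinite extensions, direct sums), closure under direct limits follows.

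The heart of the argument is therefore the passage between the module level and the complex level, i.e. (1) $\Leftrightarrow$ (3) and the matching of (3),(4),(5). I would argue (3) $\Rightarrow$ (1) by noting that direct limits in $Ch(R)$ are computed componentwise, so if a direct system of Gorenstein injective complexes has a Gorenstein injective limit, then taking a fixed degree and using stalk (or disk/sphere) complexes concentrated in that degree shows $\mathcal{GI}$ is closed under direct limits in $R$-Mod. For (1) $\Rightarrow$ (3), the plan is: given a direct system $(X^i)$ of Gorenstein injective complexes, its componentwise limit $X = \rlim X^i$ has each $X_n = \rlim X^i_n$ which lies in $\mathcal{GI}$ by (1); since (1) is equivalent (by Theorem 3) to $R$ being left noetherian with the character-module condition, one then uses the duality-pair machinery of Lemma 2 lifted to complexes to conclude that the componentwise-Gorenstein-injective exact complex $X$ is genuinely Gorenstein injective. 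The exactness of $X$ and the preservation of the $\Hom(\mathrm{Inj},-)$-exactness condition under direct limits is where one must be careful, since taking $\Hom$ out of a direct limit is not automatic --- this is precisely the step that forces the noetherian hypothesis, which guarantees that injective complexes are built from noetherian data and that the relevant $\Hom$-exactness is compatible with filtered colimits.

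The main obstacle I expect is exactly this last compatibility: verifying that the defining complete resolution of injectives and the associated $\Hom(I,-)$-acyclicity condition in the definition of Gorenstein injectivity survive the direct limit at the complex level. Over a left noetherian ring a direct sum (hence a direct limit) of injective modules is injective, and more is true --- the class $\mathcal{GI}$ is closed under direct limits by (1) --- so the degreewise cycles and the degreewise acyclicity can be controlled; but one must assemble these degreewise facts into a single exact complex of injective complexes witnessing $X \in \mathcal{GI}(Ch(R))$, and confirm that the witnessing complex remains $\Hom(\text{injective complex},-)$-exact. The cleanest route is to phrase $\mathcal{GI}(Ch(R))$ as the left half of a duality pair in $Ch(R)$ --- paralleling Lemma 2 --- so that \cite[Theorem 3.1]{holm:10:duality} applies directly and (3) $\Rightarrow$ (4) is immediate, with (4) $\Rightarrow$ (5) $\Rightarrow$ (3) closing the loop via the complex analogue of Theorem 2. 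I would organize the final proof as the cycle (1) $\Leftrightarrow$ (2) (Theorem 3), (1) $\Leftrightarrow$ (3) (componentwise limits plus the duality-pair lift), (3) $\Rightarrow$ (4) (\cite[Theorem 3.1]{holm:10:duality} in $Ch(R)$), (4) $\Rightarrow$ (5) (\cite[Corollary 7.2.3]{enochs:00:relative}), and (5) $\Rightarrow$ (3) (the $Ch(R)$ version of Proposition 4 together with Proposition 3), so that all five statements are linked.
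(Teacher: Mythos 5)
Your reduction strategy (link the complex statements to the module statements of Theorem 3) is the right skeleton, but there is a concrete error at its heart: the characterization of Gorenstein injective complexes you recall is not the one that makes this work. Over a left noetherian ring, by Liu--Zhang (the paper's \cite[Theorem 8]{liu}), a complex is Gorenstein injective if and only if \emph{every component is a Gorenstein injective module} --- no exactness of the complex and no extra $\Hom(E,-)$-acyclicity condition on $X$ itself. If exactness were part of the characterization, the stalk complexes $S^0(G)$ you invoke for (3) $\Rightarrow$ (1) would never be Gorenstein injective (they are not exact unless $G=0$), and your reduction would collapse. The componentwise characterization is also exactly what makes (1) $\Rightarrow$ (3) immediate: once one notes that (1) forces $R$ left noetherian (Lemma 3), direct limits of complexes are computed degreewise, so closure of $\mathcal{GI}$ under direct limits transfers verbatim. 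There is no need for the ``duality pair lifted to complexes,'' and the compatibility of $\Hom(\mathrm{Inj},-)$-exactness with filtered colimits that you flag as the main obstacle simply does not arise; by leaving that step as an unconstructed lift, your proposal defers the hardest point rather than resolving it.

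Your route for (3) $\Rightarrow$ (4) (Holm--J{\o}rgensen in $Ch(R)$) and for (4) $\Rightarrow$ (5) $\Rightarrow$ (3) (re-running Propositions 3 and 4 inside $Ch(R)$) is likewise not carried out, and it is substantially heavier than what is needed. The paper never reproves any of the direct-limit machinery at the complex level. Instead it proves (2) $\Rightarrow$ (4) by citing \cite[Theorem 1]{I} ($dw\mathcal{C}$ is covering in $Ch(R)$ when $\mathcal{C}$ is covering, closed under direct limits and extensions), and it closes the loop with (5) $\Rightarrow$ (2): take a special Gorenstein injective precover $G \rightarrow S^0(M)$ with kernel $A$, and use the disk complexes $D^n(G')$ together with the adjunction-type isomorphisms $Ext^1(D^n(G'),A) \simeq Ext^1_R(G',A_n)$ and $Hom_{Ch(R)}(D^n(G'),X) \simeq Hom_R(G',X_n)$ to conclude that $G_0 \rightarrow M$ is a special Gorenstein injective precover of $M$; then Theorem 3 (a purely module-level statement) upgrades ``special precovering'' to ``covering.'' So every complex-level assertion is reduced to $R\text{-Mod}$, and the only genuinely new inputs are Liu--Zhang's componentwise characterization and \cite[Theorem 1]{I} --- neither of which appears, correctly stated, in your proposal.
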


\begin{proof}
(1) $\Leftrightarrow$ (2) by Theorem 3.\\
(1) $\Rightarrow$ (3) Since $\mathcal{GI}$ is covering in this case, $R$ is a left noetherian ring (Lemma 3). By \cite[Theorem 8]{liu}, a complex is Gorenstein injective if and only if it is a complex of Gorenstein injective modules. Since $\mathcal{GI}$ is closed under direct limits, it follows that the class of Gorenstein injective complexes is also closed under direct limits.\\
(3) $\Rightarrow$ (1) Let $(G_i)_{i \in I}$ be a family of Gorenstein injective modules. For each $i \in I$ consider the complex $S^0(G_i) = 0 \rightarrow G_i \rightarrow 0$ with $G_i$ in the zeroth place. By hypothesis, $\underrightarrow{lim} S^0(G_i)$ is a Gorenstein injective complex. By \cite[Theorem 8]{liu}, $\underrightarrow{lim} {G_i} \in \mathcal{GI}$.\\
(2) $\Rightarrow$ (4) By \cite[Theorem 1]{I}, if $\mathcal{C}$ is a covering class of modules such that $\mathcal{C}$ is closed under direct limits and extensions, then the class of complexes of modules from $\mathcal{C}$, $dw \mathcal{C}$, is covering in $Ch(R)$. So if $\mathcal{GI}$ is covering (hence closed under direct limits) then the class of Gorenstein injective complexes, $dw \mathcal{GI}$, is covering in $Ch(R)$.\\
(4) $\Rightarrow$ (5) is immediate, since a covering class is special precovering.\\
(5) $\Rightarrow$ (2) Let $M$ be a left $R$-module and consider the complex $ S^0(M) =  0 \rightarrow M \rightarrow 0$ with $M$ in the zeroth place. Let $G \rightarrow S^0(M)$ be a special Gorensten injective precover with kernel $A$.\\

Let $G' \in \mathcal{GI}$. We have that $D^n(G') = 0 \rightarrow G' = G' \rightarrow 0$ with $G'$ in places $n$ and $n+1$ is a Gorenstein injective complex, so $Ext^1 (D^n(G'), A)= 0$. But $Ext^1(D^n(G'), A) \simeq Ext^1_R(G', A_n)$, for each $n$. So $A_n \in  \mathcal{GI} ^\bot$, for each $n$. In particular $A_0 \in \mathcal{GI} ^\bot$.\\
By hypothesis, the sequence $$0 \rightarrow Hom_{Ch(R)} (D^n(G'), A) \rightarrow Hom_{Ch(R)} (D^n(G'), G)
 \rightarrow Hom_{Ch(R)} (D^n(G'), S^0(M)) \rightarrow 0$$ is exact. But $Hom_{Ch(R)} (D^n(G'), A) \simeq Hom(G', A_0)$, $Hom_{Ch(R)} (D^n(G'), G) \simeq Hom(G', G_0)$, and $Hom_{Ch(R)} (D^n(G'), S^0(M)) \simeq Hom(G', M)$. So $0 \rightarrow Hom(G', A_0) \rightarrow Hom(G', G_0) \rightarrow Hom(G', M) \rightarrow 0$ is exact. Thus $G_0 \rightarrow M$ is a special Gorenstein injective precover.\\
  
  Since $\mathcal{GI}$ is special precovering, it is a covering class (by Theorem 3).
\end{proof}


\end{document}